%3 May 2012
\documentclass[10pt,reqno]{amsart}

\pagestyle{plain}
\usepackage{dsfont}
\usepackage[bottom]{footmisc}
\usepackage{amssymb, amsmath, amsthm}
\usepackage{enumerate,color}
%\usepackage{graphicx}
%% expansion of width
\textwidth=15.7cm
\textheight=22.5cm
\parskip=3pt
\parindent=8mm
\oddsidemargin=2mm
\evensidemargin=0mm
\topmargin=-0.5cm
\marginparwidth=1cm
%% definition of theorem-type environments
\newtheorem{thm}{Theorem}[section]
\newtheorem{lem}[thm]{Lemma}

\newtheorem{defn}[thm]{Definition}

\numberwithin{equation}{section}

\newcommand{\bel}{\begin{equation} \label}
\newcommand{\ee}{\end{equation}}
\def\beq{\begin{equation}}
\def\eeq{\end{equation}}
\newcommand{\bea}{\begin{eqnarray}}
\newcommand{\eea}{\end{eqnarray}}
\newcommand{\beas}{\begin{eqnarray*}}
\newcommand{\eeas}{\end{eqnarray*}}

\newcommand{\wh}{\widehat}

\newcommand{\re}{\mathfrak R}

\newcommand{\R}{\mathbb{R}}
 
\newcommand{\N}{\mathbb{N}}

\newcommand{\cA}{\mathcal{A}}
\newcommand{\cB}{\mathcal{B}}

\newcommand{\cL}{\mathcal{L}}

\newcommand{\cS}{\mathcal{S}}
\newcommand{\cR}{\mathcal{R}}

%
  %div
  %grad
  %rot

\newcommand{\supp}{\mathrm{supp}\,}  %supp

\allowdisplaybreaks
%%  item

\def\epsilon{\varepsilon}
\def\phi {\varphi}

\providecommand{\abs}[1]{\left\lvert#1\right\rvert}
% pour les normes
\providecommand{\norm}[1]{\left\lVert#1\right\rVert}

\renewcommand{\leq}{\leqslant}
\renewcommand{\geq}{\geqslant}

\providecommand{\abs}[1]{\left\lvert#1\right\rvert}
% pour les normes
\providecommand{\norm}[1]{\left\lVert#1\right\rVert}

%\begin{document}

%\begin{center}
%\large \bf A uniqueness result for time-fractional diffusion equations with space-dependent variable order
%\end{center}
%\begin{center}
%Yavar Kian, Eric Soccorsi, and Masahiro Yamamoto
%\end{center}
%%%%%%%%%%%%%%%%%%%%%%%%%%%%%%%%%%%%%%%%%%%%%%%%%%%%%%%%%%%%
%%%%%%%%%%%%%%%%%%%%%%%%%%%%%%%%%%%%%%%%%%%%%%%%%%%%%%%%%%%%
\title[Solving singular time-fractional diffusion equations]
{\bf Solving time-fractional diffusion equations with singular source term}
\author{Yavar Kian and Eric Soccorsi}
\address{Aix Marseille Univ, Universit\'e de Toulon, CNRS, CPT, Marseille, France}

\email{yavar.kian@univ-amu.fr}

\address{Aix Marseille Univ, Universit\'e de Toulon, CNRS, CPT, Marseille, France}
\email{eric.soccorsi@univ-amu.fr}

\date{}

%%%%%%%%%%%%%%%%%%%%%%%%%%%%%%%%%%%
%%%%%%%%%%%%%%%%%%%%%%%%%%%%%%%%%%%

%

\begin{document}

\begin{abstract} This article deals with time-fractional diffusion equations with time-dependent singular source term. Whenever the order of the time-fractional derivative is either multi-term, distributed or space-dependent, we prove that the system admits a unique weak solution enjoying a Duhamel representation, provided that the time-dependence of the source term is a distribution. 
\end{abstract}

\maketitle

%%%%%%%%%%%%%%%%%%%%%%%%%%%%%%%%%%%%%%%%%%%%%%%%%%%%%%%%%%%%%%%%%
%%%%%%%%%%%%%%%%%%%%%%%%%%%%%%%%%%%%%%%%%%%%%%%%%%%%%%%%%%%%%%%%%
%%%%%%%%%%%%%%%%%                                    Introduction                             %%%%%%%%%%%%%%%%%%%%%%
%%%%%%%%%%%%%%%%%%%%%%%%%%%%%%%%%%%%%%%%%%%%%%%%%%%%%%%%%%%%%%%%%
%%%%%%%%%%%%%%%%%%%%%%%%%%%%%%%%%%%%%%%%%%%%%%%%%%%%%%%%%%%%%%%%%

\section{Introduction and settings}

\subsection{Time-fractional derivatives}
\label{sec-settings}

In the present article, $\Omega$ is a bounded and connected open subset of $\R^d$, $d \geq 2$, 
with Lipschitz  boundary $\partial \Omega$. Given $K\in L^1_{\mathrm{loc}}(\R_+,L^\infty(\Omega)) \cap C^\infty(\R_+,L^\infty(\Omega))$, where $\R_+:=(0,+\infty)$, we introduce the integral operator
$$(I_Kg)(t,x) :=\int_0^t K(t-s,x)g(s,x)ds,\ g\in L^1_{\mathrm{loc}}(\R_+,L^2(\Omega)),\ x\in\Omega,\ t\in\R_+.$$
For any complete locally convex topological vector space $X$, we denote by $D_+'(\R,X)$ (resp., $\cS_+'(\R,X)$) the set of $X$-valued distributions (resp., tempered distributions) in $D'(\R,X)$ (resp., $\cS'(\R,X)$) that are supported in $[0,+\infty)$. Evidently, any distribution in $D_+'(\R,X')$, where $X'$ is the topological space dual to $X$, may be regarded as a continuous linear form in $D_{\mathrm a}(\R,X):=\{\psi \in C^\infty(\R,X);\ \exists R>0,\ \supp \psi \subset(-\infty,R)\}$, 
%of smooth $L^2(\Omega)$-valued functions whose support is bounded from above
endowed with the associated canonical LF-topology. We denote by $\langle \cdot , \cdot \rangle_{{\mathrm a},X}$ or simply by $\langle \cdot , \cdot \rangle$ when there is no ambiguity, the corresponding duality pairing. Thus, bearing in mind that $s\mapsto\int_0^{+\infty}K(t,\cdot)\psi(t+s,\cdot)dt\in D_{\mathrm a}(\R,L^2(\Omega))$ whenever $\psi\in D_{\mathrm a}(\R,L^2(\Omega))$, we extend $I_K$ as a continuous linear map from $D_+'(\R,L^2(\Omega))$ to $D_+'(\R,L^2(\Omega))$ by setting
$$\langle I_Kv,\psi \rangle := \left \langle v(s,\cdot),\int_0^{+\infty}K(t,\cdot)\psi(t+s,\cdot)dt \right \rangle,\ v \in D_+'(\R,L^2(\Omega)),\ \psi\in D_{\mathrm a}(\R,L^2(\Omega)), $$
where, as usual, $L^2(\Omega)$ is identified with its dual space.
Next, with reference to the definition \cite[Chap. 6, Section 5, Formula 15]{S} of non-integer order distributional derivatives, we introduce the Riemann-Liouville (resp. Caputo) fractional derivative $D_{t,K}$ (resp., $\partial_{t,K}$) with kernel $K$ as 
$D_{t,K}v:=\partial_t I_Kv$ (resp., $\partial_{t,K} v := I_K \partial_t v$) for all $v\in D_+'(\R,L^2(\Omega))$.

\subsection{Initial boundary value problem with a singular source term}

We consider the initial boundary value problem (IBVP) with initial state $u_0$ and source $F$,
\bel{eq1}
\left\{ \begin{array}{rcll} 
( \partial_{t,K}+\cA ) u(t,x) & = & F(t,x), & (t,x)\in 
\R_+\times\Omega\\
 u(t,x) & = & 0, & (t,x) \in \R_+\times\partial\Omega \\  
 u(0,x) & = & u_0(x), & x \in \Omega.
\end{array}
\right.
\ee
Here and below, we set
$$ 
\cA u(x) :=-\sum_{i,j=1}^d \partial_{x_i} 
\left( a_{i,j}(x) \partial_{x_j} u(x) \right)+q(x)u(x),\ x \in \Omega,
$$ 
where $q \in L^{\frac{d}{2}}(\Omega)$ is non-negative and 
$a:=(a_{i,j})_{1 \leq i,j \leq d} \in L^\infty(\Omega,\R^{d^2})\cap H^1(\Omega,\R^{d^2})$
is symmetric and satisfies the following ellipticity condition:
\bel{ell}
\exists c>0,\ \sum_{i,j=1}^d a_{i,j}(x) \xi_i \xi_j \geq c |\xi|^2,\ x \in \Omega,\ \xi=(\xi_1,\ldots,\xi_d) \in \R^d.
\ee
%for some positive constant $c$. 
Let $u_0\in L^2(\Omega)$ and $F\in D_+'(\R,L^2(\Omega))$. 
\begin{defn}
\label{d1}
We say that $u\in \cS_+'(\R,L^2(\Omega))$ is a weak-solution to \eqref{eq1} if the two following conditions are satisfied simultaneously:
\begin{enumerate}[{\rm(i)}]
\item 
$
\left \langle D_{t,K} u+\mathcal A u,\psi\right\rangle_{{\mathrm a},D(\Omega)}=\left\langle K_+  u_0 + F,\psi \right\rangle_{{\mathrm a},L^2(\Omega)}$ for all $\psi\in D_{\mathrm a}(\R,D(\Omega))$,
where $K_+(t,x):=K(t,x) \mathds{1}_{\R_+}(t)$ and $\mathds{1}_{\R_+}$ is the characteristic function of $\R_+$.
\item For all $p\in\mathbb C_+:=\{z\in\mathbb C;\ \re z>0\}$, the Laplace transform (with respect to $t$) of $u$ at $p$, defined for a.e. $x \in \Omega$ by $\wh u(p,x): = \left\langle u(t,x),e^{-pt}\right\rangle_{\cS_+'(\R),\cS_+(\R)}$ where $\cS_+(\R):=\{ \phi\in C^\infty(\R);\ \phi_{|\R_+}\in \cS(\R_+)\}$, lies in $H_0^1(\Omega)$.
\end{enumerate}
\end{defn}

\subsection{Anomalous diffusion processes}
\label{ref-adp}
The main purpose of this article is to examine the existence and the uniqueness issues of a weak solution to \eqref{eq1} in the presence of a singular source term $F$, for each of the three following classical types of diffusion models:\begin{enumerate}[1)]
\item \textit{Space-dependent variable order diffusion equations} associated with $\alpha\in L^\infty(\Omega)$ satisfying
$0 < \alpha_0 \leq \alpha(x) \leq \alpha_M<1$ for a.e. $x \in \Omega$, where $\alpha_M<2\alpha_0$, and
\bel{Kvariable} 
K(t,x):=\frac{t^{-\alpha(x)}}{\Gamma(1-\alpha(x))},\ t\in\R_+,\ x\in\Omega.
\ee
\item \textit{Distributed order time-fractional diffusion } associated with a non-negative weight function $\mu \in L^\infty(0,1)$ such that $\exists \alpha_0 \in(0,1),\ \exists \epsilon \in (0,\alpha_0),\ \forall \alpha \in (\alpha_0-\epsilon,\alpha_0),\ \mu(\alpha) \ge \frac{\mu(\alpha_0)}{2}>0$, and
\bel{Kdistributed} 
K(t,x):=\int_0^1 \mu(\alpha)\frac{t^{-\alpha}}{\Gamma(1-\alpha)}d\alpha,\quad t\in\R_+,\ x\in\Omega.
\ee
\item \textit{Multiterm time-fractional diffusion models} associated with $1<\alpha_1<\ldots<\alpha_N<1$ for some $N\in\mathbb N:=\{1,2,\ldots\}$, $\rho_j\in L^\infty(\Omega)$ such that $0<c_0 \leq\rho_j(x) \leq C_0 <+\infty$ for $j=1,\ldots,N$ and a.e. $x \in \Omega$, and 
\bel{Kmultiple} 
K(t,x):=\sum_{j=1}^N\rho_j(x) \frac{t^{-\alpha_j}}{\Gamma(1-\alpha_j)},\ t\in\R_+,\ x\in\Omega.
\ee
\end{enumerate}
Anomalous diffusion in a heterogeneous medium is a growing issue of scientific research, with numerous applications  areas such as geophysics, hydrology or biology, see e.g. \cite{CZZ,FS,FH}. Some typical examples are fluid flow in porous media, propagation of seismic waves, and protein dynamics. In this situation the variations of permeability in different spatial positions caused by the heterogeneities of the medium induce location dependent diffusion phenomena 
which are correctly described by the space dependent model \eqref{eq1} associated with the kernel \eqref{Kvariable}. On the other hand, \eqref{Kdistributed} is used  for modeling ultra slow diffusion processes whose mean square displacement scales like a log with respect to the time variable. For instance, such  phenomena were observed in polymer physics or the kinetics of particles moving in quenched random force fields, see e.g. 
in \cite{N,SCK}. Finally the multi-term time-fractional diffusion equation \eqref{eq1} associated with \eqref{Kmultiple} is a useful tool for modeling the behavior of viscoelastic fluids and rheological material, see e.g. \cite{FLTZ}.

\section{Singular sources and the well-posedness issue}

\subsection{What we are aiming for}
The well-posedness issue of constant-order fractional diffusion equations (obtained from \eqref{eq1} by setting $K(t,x):=\frac{t^{-\beta}}{\Gamma(1-\beta)}$ for some fixed $\beta\in(0,1)$)  has received a great deal of attention from the mathematical community over the last decades, see e.g. \cite{EK,SY,KY2} and the references therein. Similarly, several techniques were used in \cite{KSY,LHY,LKS,LLY2,LKS} to build a solution to variable-order, distributed or multi-term time-fractional processes, and we refer the reader to \cite{K2} for the study of the equivalence of these different approaches. All the above mentioned works assume that the source term $F$ is within the class $L^1_{\mathrm{loc}}(\R_+,L^2(\Omega))$. Recently, the well-posedness of constant-order time-fractional diffusion systems was examined in \cite{Y}, under the assumption that $t \mapsto F(t,\cdot)$ lies in a negative order Sobolev space. The aim of this article is to extend the study carried out in \cite{Y} in two main directions: Firstly, by adapting the analysis to the wider class of diffusion equations described in Section \ref{ref-adp} and, secondly, by considering source terms $F$ with a distributional time-dependence only.

\subsection{Statement of the result}

Let $\cR\in L^1_{\mathrm{loc}}(\R,B(L^2(\Omega)))\cap D_+'(\R,\cB(L^2(\Omega)))$.
%where $\cB(L^2(\Omega))$ denotes the set of linear bounded operators in $L^2(\Omega)$. 
Since $s\mapsto \int_\R \cR(t)^*\phi(t+s,\cdot)dt=\int_0^{+\infty} \cR(t)^*\phi(t+s,\cdot)dt\in D_{\mathrm a}(\R,L^2(\Omega))$ whenever $\phi\in D_{\mathrm a}(\R, L^2(\Omega))$, where $R^*(t)$ is the adjoint operator to $R(t)$, we define $\cR * v$ for any $v\in D_+'(\R,L^2(\Omega))$, as the unique distribution in $D_+'(\R,L^2(\Omega))$ such that
$$ \left\langle \cR * v ,\phi \right\rangle_{{\mathrm a},L^2(\Omega)}=\left\langle v(s,\cdot),\int_\R \cR(t)^*\phi(t+s,\cdot)dt\right\rangle_{{\mathrm a},L^2(\Omega)},\ \phi\in D_{\mathrm a}(\R, L^2(\Omega)). $$
As a consequence we have $\partial_t^m(\cR* v)=\cR* (\partial_t^mv)$ for all $m\in\mathbb N$.

For $\theta\in(\frac{\pi}{2},\pi)$ and $\delta \in (0,+\infty)$ fixed, put 
\begin{equation}\label{g2}
\gamma_0(\delta,\theta):=\{\delta\, e^{i\beta}:\ \beta\in[-\theta,\theta]\},\quad\gamma_\pm(\delta,\theta)
:=\{s\,e^{\pm i\theta}\mid s\in[\delta,\infty)\},
\end{equation}
and let the contour $\gamma(\delta,\theta):=\gamma_-(\delta,\theta)\cup\gamma_0(\delta,\theta)\cup\gamma_+(\delta,\theta)$ be oriented in the counterclockwise direction in $\mathbb C$.
Next, $K$ being either of the three expressions \eqref{Kvariable}, \eqref{Kdistributed} or \eqref{Kmultiple}, we refer to \cite{K2,KSY,LKS} and set for all $\psi \in L^2(\Omega)$, 
\begin{equation} 
\label{S0}
S_{j,K}(t)\psi:=\left\{\begin{array}{ll} \frac{1}{2i\pi}\int_{\gamma(\delta,\theta)}e^{t p}\left(A+p\wh{K}(p,\cdot)\right)^{-1} \wh{K}(p,\cdot)^{1-j}\psi d p & \textrm{if}\ t>0\\
0 & \textrm{if}\ t\leq0
\end{array} \right., j=0,1.
\end{equation}
Here and in the remaining part of this text, $A$ denotes the self-adjoint operator in $L^2(\Omega)$ acting as $\mathcal A$ on its domain $D(A):=\{h\in H^1_0(\Omega);\ \mathcal A h\in L^2(\Omega)\}$. 
We have $S_{j,K}\in L^1_{\mathrm{loc}}(\R,L^2(\Omega))\cap\mathcal S_+'(\R,\cB(L^2(\Omega)))$, $j=0,1$, by \cite[Lemma 6.1]{KLY} and \cite[Lemmas 2.1, 3.1 \& 4.2]{K2} and it is apparent that
\begin{equation}
\label{sol}
u(t,\cdot):=S_{0,K}(t)u_0+S_{1,K}*F(t,\cdot),\ t\in\R,
\end{equation}
lies in $D_+'(\R,L^2(\Omega))$ whenever $F\in D_+'(\R,L^2(\Omega))$. Moreover, \eqref{sol} reads
$$u(t,\cdot)=S_{0,K}(t)u_0+\int_0^tS_{1,K}(t-s)F(s,\cdot)ds,\ t\in\R, $$
provided that $F\in L^1_{\mathrm{loc}}(\R,L^2(\Omega))\cap D_+'(\R,L^2(\Omega))$ and if we assume in addition that 
$(1+t)^{-N}F\in L^1(\R_+;L^2(\Omega))$ for some $N\in\mathbb N$, then it can be checked from \cite[Theorems 1.3, 1.4 \& 1.5]{K2}
that \eqref{sol} is a weak solution to \eqref{eq1} in the sense of Definition \ref{d1}. The main achievement of this short article is the following extension of the above claim to the case of a source term $F\in \cS_+'(\R,L^2(\Omega))$.

\begin{thm}
\label{t1}
Let $K$ be given by either of the three expressions \eqref{Kvariable}, \eqref{Kdistributed} or \eqref{Kmultiple}. Assume \eqref{ell} and pick $u_0\in L^2(\Omega)$ and $F\in \cS_+'(\R,L^2(\Omega))$. Then, \eqref{sol} is the unique weak solution to \eqref{eq1} in the sense of Definition \ref{d1}.
\end{thm}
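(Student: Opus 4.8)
The plan is to prove existence and uniqueness separately, working systematically through the Laplace transform. For \textbf{existence}, I would take $u$ defined by \eqref{sol} and verify conditions (i) and (ii) of Definition \ref{d1}. Since $F\in\cS_+'(\R,L^2(\Omega))$ is a tempered distribution supported in $[0,\infty)$, its Laplace transform $\wh F(p,\cdot)$ is well-defined and holomorphic on $\mathbb C_+$ with values in $L^2(\Omega)$, with polynomial growth in $p$ on half-planes $\re p\ge\sigma>0$. The key identity to establish is that the Laplace transform of $u$ satisfies
$$\wh u(p,\cdot)=\left(A+p\wh K(p,\cdot)\right)^{-1}\left(\wh K(p,\cdot)u_0+\wh F(p,\cdot)\right),\quad p\in\mathbb C_+.$$
This follows because $\widehat{\cR*v}=\wh\cR\,\wh v$ for the convolution introduced before the theorem (which is the content of the identity $\partial_t^m(\cR*v)=\cR*\partial_t^m v$ pushed to the Laplace side), combined with the known Laplace transforms of $S_{0,K}$ and $S_{1,K}$, namely $\wh{S_{0,K}}(p)=(A+p\wh K(p,\cdot))^{-1}\wh K(p,\cdot)$ and $\wh{S_{1,K}}(p)=(A+p\wh K(p,\cdot))^{-1}$, read off from \eqref{S0} by contour deformation. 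Once this is in hand, (ii) follows since $(A+p\wh K(p,\cdot))^{-1}$ maps $L^2(\Omega)$ into $D(A)\subset H_0^1(\Omega)$, and (i) is obtained by taking the Laplace transform of the weak formulation: testing against $\psi\in D_{\mathrm a}(\R,D(\Omega))$ and using that $\widehat{D_{t,K}u}(p,\cdot)=p\wh K(p,\cdot)\wh u(p,\cdot)-\wh{K_+}(p,\cdot)u_0$ reduces (i) to the resolvent identity above, which holds by construction. One must also check $u\in\cS_+'(\R,L^2(\Omega))$ rather than merely $D_+'$; this uses the polynomial bounds on $\wh K$, $\wh K^{-1}$ and the resolvent established in \cite{K2,KLY}, together with a Paley--Wiener type argument.

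For \textbf{uniqueness}, suppose $u$ is a weak solution in the sense of Definition \ref{d1} with $u_0=0$ and $F=0$; I must show $u=0$. Condition (ii) guarantees $\wh u(p,\cdot)\in H_0^1(\Omega)$ for every $p\in\mathbb C_+$, and condition (i), after taking Laplace transforms using the same identities as above, forces
$$\left(A+p\wh K(p,\cdot)\right)\wh u(p,\cdot)=0\quad\text{in }D(\Omega)'\ \text{(distributions on }\Omega),\quad p\in\mathbb C_+.$$
Since $\wh u(p,\cdot)\in H_0^1(\Omega)$ and, by the sectoriality/invertibility results recalled from \cite{K2}, the operator $A+p\wh K(p,\cdot)$ is boundedly invertible on $L^2(\Omega)$ for $p\in\mathbb C_+$ (here $\re(p\wh K(p,\cdot))\ge 0$ plays the decisive role — $\wh K(p,\cdot)$ lands in a sector of half-angle $<\pi/2$ for each of the three kernels, which is exactly why $\alpha_M<2\alpha_0$ etc. are imposed), we conclude $\wh u(p,\cdot)=0$ for all $p\in\mathbb C_+$. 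Injectivity of the Laplace transform on $\cS_+'(\R,L^2(\Omega))$ then yields $u=0$.

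The \textbf{main obstacle} I anticipate is the careful handling of the distributional Laplace-transform bookkeeping: justifying that $\widehat{S_{1,K}*F}=\wh{S_{1,K}}\wh F$ as an $L^2(\Omega)$-valued holomorphic function on $\mathbb C_+$ when $F$ is only a tempered distribution in time, and that the weak formulation (i) is genuinely equivalent to the resolvent equation for $\wh u$. This requires being precise about which test-function space one pairs against ($D_{\mathrm a}$ versus $\cS_+$), about the support conditions, and about interchanging the $x$-dependence (the operator $A$, the $L^\infty(\Omega)$-valued kernel $\wh K(p,\cdot)$) with the time-distributional pairing. The elliptic/spectral facts — that $S_{j,K}\in L^1_{\mathrm{loc}}\cap\cS_+'$, the resolvent bounds, the sector containment of $\wh K$ — are all quotable from \cite{K2,KLY,KSY,LKS}, so the real work is the soft-analysis glue tying the time-distribution theory to those estimates. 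A secondary technical point is verifying the polynomial growth of $\wh u(p,\cdot)$ needed for membership in $\cS_+'$, which should follow from the resolvent bound $\norm{(A+p\wh K(p,\cdot))^{-1}}_{\cB(L^2)}\le C|p\wh K(p,\cdot)|^{-1}$ away from the spectrum together with the known asymptotics of $\wh K(p,\cdot)$ as $|p|\to\infty$ and $|p|\to0$.
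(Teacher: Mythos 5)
Your plan follows essentially the same route as the paper: pass to the Laplace transform, reduce everything to the resolvent identity $\wh u(p,\cdot)=(A+p\wh K(p,\cdot))^{-1}(\wh K(p,\cdot)u_0+\wh F(p,\cdot))$, obtain (ii) from $D(A)\subset H^1_0(\Omega)$, and obtain (i) and uniqueness from the invertibility of $A+p\wh K(p,\cdot)$ quoted from \cite{K2,KSY} together with the injectivity of the Laplace transform. Your uniqueness argument is the paper's argument; the paper merely isolates as a preliminary lemma the facts that $I_Ku\in\cS_+'(\R,L^2(\Omega))$ and $\wh{I_Ku}(p,\cdot)=\wh K(p,\cdot)\wh u(p,\cdot)$, which you use implicitly when writing $\wh{D_{t,K}u}=p\wh K\wh u$; that lemma is a short estimate exploiting $K\in L^1(0,1;L^\infty(\Omega))\cap L^\infty(1,+\infty;L^\infty(\Omega))$.

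The step you flag as the ``main obstacle'' --- justifying $\wh{S_{1,K}*F}=\wh{S_{1,K}}\,\wh F$ when $F$ is only a tempered distribution in time, and the companion fact that $S_{1,K}*F\in\cS_+'(\R,L^2(\Omega))$ --- is precisely the content of the existence proof, and you do not supply a mechanism for it; a ``Paley--Wiener type argument'' is not what the paper does and is not obviously available here. The paper's device is the structure theorem for tempered distributions (\cite[Theorem 8.3.1]{FJ}): write $F=\partial_t^{N_1}F_1$ with $F_1\in L^1_{\mathrm{loc}}(\R,L^2(\Omega))$ of polynomial growth, so that $S_{1,K}*F=\partial_t^{N_1}(S_{1,K}*F_1)$ where $S_{1,K}*F_1(t)=\int_0^tS_{1,K}(t-s)F_1(s)\,ds$ is a classical integral. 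Membership in $\cS_+'$ then follows from the quotable bound $\norm{S_{1,K}(t)}_{\cB(L^2(\Omega))}\leq C\max(1,t^{-r_1})$ with $r_1\in(0,1)$, the convolution theorem need only be applied in the classical setting of \cite[Propositions 2.2, 3.2 \& 4.5]{K2} to give $\cL(S_{1,K}*F_1)(p,\cdot)=(A+p\wh K(p,\cdot))^{-1}\wh{F_1}(p,\cdot)$, and the factor $p^{N_1}$ recombines into $\wh F(p,\cdot)=p^{N_1}\wh{F_1}(p,\cdot)$. Without this reduction (or an equivalent one) the identity $\wh{S_{1,K}*F}=\wh{S_{1,K}}\wh F$ for general $F\in\cS_+'(\R,L^2(\Omega))$ remains unproved in your write-up, and it is the only genuinely nontrivial point of the existence half.
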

The proof of Theorem \ref{t1} is given in Section \ref{sec-pr}. 

\subsection{Brief comments}
Notice that the statement of Theorem \ref{t1} includes a unique weak solution enjoying the Duhamel representation formula \eqref{sol},
to anomalous diffusion processes governed by \eqref{eq1} with a delta-singular source term of the form
$$ F(t,x)=\sum_{j=1}^N \delta_{t_j}^{(k_j)}(t)f_j(x),\ t\in\R,\ x\in\Omega,$$
where $0 \leq t_1<t_2<\ldots<t_N < +\infty$, $k_1,\ldots,k_N \in \mathbb N\cup\{0\}$ and $f_1,\ldots,f_N\in L^2(\Omega)$ for some $N \in \N$.

Definition \ref{d1} of a weak solution to diffusion equations with a variable, distributed or multi-term fractional derivative and a possibly highly singular source term $F$ in $D_+'(\R,L^2(\Omega))$, generalizes the ones given in \cite{EK,K2,KSY,KJ,LKS} and in the references therein, in the context of more specific diffusion processes. Moreover, the representation formula of the weak solution given in \cite{K2,KSY,LKS} can be deduced from
the Duhamel formula \eqref{sol} provided by Theorem \ref{t1} under the more general assumption that $F\in \cS_+'(\R,L^2(\Omega))$. %Theorem \ref{t1} not only generalizes the analysis carried out in \cite{EK,K2,KSY,KJ,LKS}, but pushes the analysis 

\section{Proof of Theorem \ref{t1}}
\label{sec-pr}
%This section is devoted to the proof of the main theorem. We start by considering the following intermediate result.
Prior to showing Theorem \ref{t1}, we establish a technical result needed by the proof.
\subsection{Preliminaries}
The result is as follows.
\begin{lem}
\label{l1} 
Let $K$ be given by either of the formulas \eqref{Kvariable}, \eqref{Kdistributed} or \eqref{Kmultiple}. Then, for all $u \in \cS_+'(\R,L^2(\Omega))$ we have $I_Ku\in \cS_+'(\R,L^2(\Omega))$ and the identity
\begin{equation}
\label{l1a}
\wh{I_Ku}(p,\cdot)=\wh{K}(p,\cdot)\wh{u}(p,\cdot),\ p\in\mathbb C_+, 
\end{equation}
where we recall that $\wh u$ denotes the Laplace transform of $u$.
\end{lem}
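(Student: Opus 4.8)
The plan is to verify the two assertions in order: first that $I_K u$ lies in $\cS_+'(\R,L^2(\Omega))$, and then that its Laplace transform factorizes as in \eqref{l1a}. For the first point, I would check that each of the three kernels \eqref{Kvariable}, \eqref{Kdistributed}, \eqref{Kmultiple} has at worst an integrable singularity at $t=0$ and polynomial (in fact better) growth as $t\to+\infty$; in particular $K_+\in\cS_+'(\R,L^\infty(\Omega))$ and, more importantly, convolution against $K_+$ maps $D_{\mathrm a}(\R,L^2(\Omega))$ continuously into itself \emph{preserving} the property of rapid decay needed to test against tempered distributions. Concretely, one shows that for $\psi\in\cS_+(\R,L^2(\Omega))$ (or the appropriate test class with rapid decay at $+\infty$) the function $s\mapsto\int_0^{+\infty}K(t,\cdot)\psi(t+s,\cdot)\,dt$ is again smooth and rapidly decreasing, so that the pairing defining $I_K u$ extends from $D_{\mathrm a}$ to the Schwartz class and $I_K u$ is genuinely tempered. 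Then $I_K u\in\cS_+'(\R,L^2(\Omega))$ follows by duality, the support condition being immediate since $K_+$ is supported in $[0,+\infty)$.

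For the factorization \eqref{l1a}, the natural route is to compute $\wh{I_K u}(p,\cdot)$ directly from the definition of the Laplace transform of a tempered distribution, $\wh{I_K u}(p,x)=\langle (I_K u)(t,x),e^{-pt}\rangle$. The function $t\mapsto e^{-pt}\mathds{1}_{\R_+}(t)$ does not itself lie in the test space $D_{\mathrm a}$, so I would first justify that the pairing extends to it (for $\re p>0$) by a density/continuity argument, approximating $e^{-pt}\mathds 1_{\R_+}(t)$ by a sequence of $D_{\mathrm a}$ functions, exactly as is implicit in condition (ii) of Definition \ref{d1}. Applying the extended definition of $I_K$ gives
\[
\wh{I_K u}(p,x)=\left\langle u(s,x),\int_0^{+\infty}K(t,x)e^{-p(t+s)}\,dt\right\rangle
=\left\langle u(s,x),e^{-ps}\int_0^{+\infty}K(t,x)e^{-pt}\,dt\right\rangle,
\]
and since $\int_0^{+\infty}K(t,x)e^{-pt}\,dt=\wh K(p,x)$ is, for each fixed $p\in\mathbb C_+$, an $L^\infty(\Omega)$ function independent of $s$, it factors out of the duality pairing in $s$, leaving $\wh K(p,x)\langle u(s,x),e^{-ps}\rangle=\wh K(p,x)\wh u(p,x)$. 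The absolute convergence of $\int_0^{+\infty}K(t,x)e^{-pt}\,dt$ for $\re p>0$ is exactly where the integrability of $K$ near $0$ and its controlled growth at $+\infty$ are used, and is elementary for each of the three kernels (for \eqref{Kvariable} use $\Gamma$-function identities, for \eqref{Kdistributed} interchange the $\alpha$- and $t$-integrals via Fubini, for \eqref{Kmultiple} it is a finite sum).

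The main obstacle is the functional-analytic bookkeeping in the first part: one must be careful that convolution with $K_+$ really does send the relevant space of test functions into itself while preserving the decay that makes the extension to $\cS_+(\R,L^2(\Omega))$ legitimate, so that $I_K u$ is tempered and not merely a distribution in $D_+'$. The identity \eqref{l1a} is then essentially a formal computation of Laplace transforms, the only subtlety being the density argument that lets one test the tempered distribution $I_K u$ against $e^{-pt}\mathds 1_{\R_+}(t)$ and swap the order of the $s$-pairing with the (convergent) $t$-integral; both of these are standard but should be stated, since the kernels are singular at $t=0$.
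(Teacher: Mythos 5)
Your plan follows essentially the same route as the paper's proof: you show that the adjoint map $\phi\mapsto\int_0^{+\infty}K(t,\cdot)\phi(t+s,\cdot)\,dt$ preserves the test class with Schwartz-type seminorm bounds (using the integrability of $K$ near $t=0$ and its boundedness for $t\geq 1$, exactly the splitting the paper uses), which lets $I_Ku$ extend by density to $\cS_+'(\R,L^2(\Omega))$, and you then obtain \eqref{l1a} by pairing against $e^{-pt}$ and factoring the $s$-independent quantity $\wh K(p,\cdot)$ out of the duality bracket. This matches the paper's argument step for step, so the proposal is correct.
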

\begin{proof}
For all $\varphi \in D_{\mathrm a}(\R,L^2(\Omega))$, we have 
$\left\langle I_Ku,\phi\right\rangle_{{\mathrm a},L^2(\Omega)}=\left\langle u,\phi_K\right\rangle_{\cS_+'(\R,L^2(\Omega)),\cS_+(\R,L^2(\Omega))}$, 
where
$\phi_K(t,\cdot):=\int_0^{+\infty}K(s,\cdot)\phi(s+t,\cdot)$.
Since $\supp u \subset[0,+\infty)$, it follows from this that there exists two natural numbers $m_1$ and $m_2$ such that
\begin{equation}
\label{eq-pr1}
\abs{\left\langle I_Ku,\phi\right\rangle_{{\mathrm a}, L^2(\Omega)}}\leq C\sup_{t\in[0,+\infty)}\norm{(1+t)^{m_2}\partial_t^{m_1}\phi_K(t)}_{L^2(\Omega)}.
\end{equation}
Here and in the remaining part of this proof, $C$ denotes a generic positive constant which may change from line to line.
Furthermore, taking into account that $K\in L^1(0,1;L^\infty(\Omega))\cap L^\infty(1,+\infty;L^\infty(\Omega))$ whenever $K$ is given by \eqref{Kvariable}, \eqref{Kdistributed} or \eqref{Kmultiple}, we obtain for all $t\in[0,+\infty)$ that
\begin{eqnarray*}
&&\norm{(1+t)^{m_2}\partial_t^{m_1}\phi_K(t)}_{L^2(\Omega)}\\
&\leq & \int_0^{+\infty}\norm{K(s,\cdot)}_{L^\infty(\Omega)}\norm{(1+t+s)^{m_2}\partial_t^{m_1}\phi(s+t,\cdot)}_{L^2(\Omega)}ds\\
& \leq &\left(\norm{K}_{L^1(0,1;L^\infty(\Omega))}+\norm{K}_{L^\infty(1,+\infty;L^\infty(\Omega))}\int_1^{+\infty}(1+t+s)^{-2}ds\right)\sup_{t\in[0,+\infty)}\norm{(1+t)^{m_2+2}\partial_t^{m_1}\phi(t)}_{L^2(\Omega)}\\
&\leq & C\sup_{t\in[0,+\infty)}\norm{(1+t)^{m_2+2}\partial_t^{m_1}\phi(t)}_{L^2(\Omega)}.
\end{eqnarray*}
From this and \eqref{eq-pr1} it then follows that
$\abs{\left\langle I_Ku,\phi\right\rangle_{{\mathrm a},L^2(\Omega)}}\leq C\sup_{t\in[0,+\infty)}\norm{(1+t)^{m_2+2}\partial_t^{m_1}\phi(t)}_{L^2(\Omega)}$. Thus, $I_Ku$ extends by density to a vector of $\cS_+'(\R,L^2(\Omega))$. 
Moreover, for all $p\in\mathbb C_+$, it holds true that
\begin{eqnarray*}
\wh{I_Ku}(p,\cdot)&=&\left\langle u(t,\cdot),\int_0^{+\infty}K(s,\cdot)e^{-p(t+s)}ds\right\rangle_{\mathcal S'_+(\R),\mathcal S_+(\R)}\\
&=&\left\langle u(t,\cdot),e^{-pt}\int_0^{+\infty}K(s,\cdot)e^{-ps}ds\right\rangle_{\mathcal S'_+(\R),\mathcal S_+(\R)}\\
&=& \left(\int_0^{+\infty}K(s,\cdot)e^{-ps}ds\right)\left\langle u(t,\cdot),e^{-pt}\right\rangle_{\mathcal S'_+(\R),\mathcal S_+(\R)},\end{eqnarray*}
which yields \eqref{l1a}.\end{proof}

Armed with Lemma \ref{l1} we turn now to showing Theorem \ref{t1}.

\subsection{Completion of the proof}
With reference to \cite[Theorems 1.3, 1.4 \& 1.5]{K2}, we assume without loss of generality that $u_0=0$ in $\Omega$. We examine the uniqueness and the existence issues separately. We start with the uniqueness.\\
\noindent\textit{Uniqueness}. Let $u$ be a weak solution (in the sense of Definition \ref{d1}) to \eqref{eq1} associated with $F=0$ in $\mathbb R_+\times\Omega$ and $u_0=0$ in $\Omega$. Then, we have $D_{t,K}u=\partial_tI_Ku\in \cS_+'(\R,L^2(\Omega))$ by Lemma \ref{l1} and $\mathcal Au\in \mathcal S'_+(\R;D'(\Omega))$. Moreover, $\wh{D_{t,K}u}(p,\cdot)=p\wh{I_Ku}(p,\cdot)=p\wh{K}(p,\cdot)\wh{u}(p,\cdot)$ and $\wh{\mathcal Au}(p,\cdot)=\mathcal A\wh{u}(p,\cdot)$ for all $p\in\mathbb C_+$.
Therefore, applying the Laplace transform with respect to $t$ to both sides of \eqref{eq1}, we get that
$$p\wh{K}(p,x)\wh{u}(p,x)+\mathcal A\wh{u}(p,x) %=\wh{D_{t,K}u+\mathcal Au}(p,x)=0
,\ p\in\mathbb C_+,\ x\in\Omega.$$
From this and the fact that $\wh{u}(p,\cdot)\in H^1_0(\Omega)$ for all $p\in\mathbb C_+$, according to Definition \ref{d1}(ii), we get that $\wh{u}(p,\cdot)=0$ in $\Omega$ by \cite[Proposition 2.1]{KSY} and \cite[Lemma 4.1]{K2}. Thus, $u=0$ in $\mathbb R_+\times\Omega$ from the injectivity of the Laplace transform. This proves that a weak solution to \eqref{eq1}, if any, is unique.\\
\noindent \textit{Existence}. Let us establish that $u\in D_+'(\R,L^2(\Omega))$, expressed by \eqref{sol}, is a weak solution to \eqref{eq1}. Since $F\in \mathcal S'(\R,L^2(\Omega))$, then there exist $F_1\in L^1_{\mathrm{loc}}(\R,L^2(\Omega))$ and $N_1 \in \N$ such that
$F=\partial_t^{N_1}F_1$, according to \cite[Theorem 8.3.1]{FJ}. Moreover we have $(1+t^2)^{-N_2}F_1\in L^\infty(\R,L^2(\Omega))$ for some $N_2 \in \N$. 
%From this and the fact that $F\in \cS_+'(\R,L^2(\Omega))$, it then follows that $F_1(t,\cdot)=0$ for all $t<0$. 
Thus, applying \eqref{sol} with $u_0=0$, we get that
\begin{equation}
\label{eq-pr2}
u=S_{1,K}*F=S_{1,K}*(\partial_t^{N_1}F_1)=\partial_t^{N_1}(S_{1,K}*F_1),
\end{equation}
where $S_{1,K}*F_1(t)=\int_0^tS_{1,K}(t-s)F_1(s)ds$ for all $t\in \R$, as we have $F_1\in L^1_{\mathrm{loc}}(\R,L^2(\Omega))$. The next step is to notice from \cite[Lemmas 2.1, 3.1 \& 4.2]{K2} that there exists $r_1 \in (0,1)$ such that
$\norm{S_{1,K}(t)}_{\cB(L^2(\Omega))}\leq C\max(1,t^{-r_1})$
for some positive constant $C$, whenever $t \in \R_+$ and $K$ is given by either \eqref{Kvariable}, \eqref{Kdistributed} or \eqref{Kmultiple}. From this
and the fact that $(1+t^2)^{-N_2}F_1\in L^\infty(\R,L^2(\Omega))$, it then follows that $S_{1,K}*F_1\in\cS_+'(\R,L^2(\Omega))$. 
Therefore, we have $u\in \cS_+'(\R,L^2(\Omega))$ from \eqref{eq-pr2}.

Furthermore, since $(1+t)^{-2N_2-2}F_1\in L^1(\R_+,L^2(\Omega))$, we infer from \cite[Propositions 2.2, 3.2 \& 4.5]{K2} that 
\begin{equation}
\label{eq-pr3}
\cL \left( S_{1,K}*F_1 \right)(p,\cdot)=\left(A+p\wh{K}(p,\cdot)\right)^{-1}\wh{F_1}(p,\cdot),\ p\in\mathbb C_+,
\end{equation}
where $\cL$ stands for the Laplace transformation with respect to $t$. In view of \eqref{eq-pr2}, giving $\wh{u}(p,\cdot) =\cL \left( \partial_t^{N_1}S_{1,K}*F_1 \right)(p,\cdot)  = p^{N_1}\cL \left( S_{1,K}*F_1 \right)(p,\cdot)$ for all $p\in\mathbb C_+$, we deduce from \eqref{eq-pr3} that
\begin{eqnarray}
\wh{u}(p,\cdot) %& =& \cL \left( \partial_t^{N_1}S_{1,K}*F_1 \right)(p,\cdot) \nonumber \\
%& = & p^{N_1}\cL \left( S_{1,K}*F_1 \right)(p,\cdot) \nonumber\\
= \left(A+p\wh{K}(p,\cdot)\right)^{-1}p^{N_1}\wh{F_1}(p,\cdot) &=& \left(A+p\wh{K}(p,\cdot)\right)^{-1}\cL \left( \partial_t^{N_1}F_1 \right)(p,\cdot) \nonumber \\
& = & \left(A+p\wh{K}(p,\cdot)\right)^{-1}\wh{F}(p,\cdot). \label{t1c}
\end{eqnarray}
Since $\wh{F}(p,\cdot)\in L^2(\Omega)$ for all $p\in\mathbb C_+$, then we have $\wh{u}(p,\cdot)\in D(A)\subset H^1_0(\Omega)$, showing that Definition \ref{d1}(ii) is satisfied.
Finally, putting \eqref{t1c} together with Lemma \ref{l1}, we get that $v:= D_t^Ku+\mathcal A u$ lies in $\mathcal S'_+(\R,D'(\Omega))$ and that 
$$\wh{v}(p,\cdot)=(\mathcal A+p\wh{K}(p,\cdot))\wh{u}(p,\cdot)=(A+p\wh{K}(p,\cdot))\left(A+p\wh{K}(p,\cdot)\right)^{-1}\wh{F}(p,\cdot)=\wh{F}(p,\cdot),\ p\in\mathbb C_+. 
$$
As a consequence we have $D_{t,K}u+\mathcal A u=F$ in $\mathcal S'_+(\R,D'(\Omega))$, by injectivity of the Laplace transform $\cL$, which entails Definition \ref{d1}(i). This proves that $u$ is a weak solution to \eqref{eq1}.

\section*{Acknowledgments}
 This work was supported by the Agence Nationale de la Recherche (ANR) under grant ANR-17-CE40-0029 (projet MultiOnde).

%%%%%%%%%%%%%%%%%%%%%%%%%%%%%%%%%%%%%%%%%%%%%%%%%%%%%%%%%%%%%%%%%
%%%%%%%%%%%%%%%%%%%%%%%%%%%%%%%%%%%%%%%%%%%%%%%%%%%%%%%%%%%%%%%%%
%%%%%%%%%%%%%%%%%                                  Bibliography                             %%%%%%%%%%%%%%%%%%%%%%
%%%%%%%%%%%%%%%%%%%%%%%%%%%%%%%%%%%%%%%%%%%%%%%%%%%%%%%%%%%%%%%%%
%%%%%%%%%%%%%%%%%%%%%%%%%%%%%%%%%%%%%%%%%%%%%%%%%%%%%%%%%%%%%%%%%
 
\end{document}